\newtheorem{theorem}{Theorem}
\newtheorem{lemma}{Lemma}
\title{Boundary Observer for Space and Time Dependent Reaction-Advection-Diffusion Equations}
\author{
        Agus Hasan\\
        Center for Unmanned Aerial Vehicles\\
        The Maersk McKinney M\o ller Institute\\
        University of Southern Denmark\\
        E: agha@mmmi.sdu.dk\\
}
\begin{document}
\maketitle

\begin{abstract}
This paper presents boundary observer design for space and time dependent reaction-advection-diffusion equations using backstepping method. The method uses only a single measurement at the boundary of the systems. The existence of the observer kernel equation is proved using the method of successive approximation.
\end{abstract}

\section{Introduction}

Many physical phenomena can be described by partial differential equations (PDEs). Some examples include model of flexible cable in an overhead crane \cite{Brigit}, automated managed pressure drilling \cite{Agus1,Agus2,Hasan1}, and battery management systems \cite{Gu,Shuxia1}. Stabilization of PDEs with boundary control is considered as a challenging topic. After the introducing of the backstepping method, it becomes an emerging area \cite{Krstic}. The backstepping method has been successfully used for estimation  and control of many PDEs, such as the Korteweg-de Vries equation \cite{Edu1,A3}, Benjamin-Bona-Mahony equation \cite{A4}, Schrodinger equation \cite{Kr1}, Ginzburg-Landau equation \cite{Aam}, and 2$\times$2 linear hyperbolic PDEs \cite{fin,deu}. In engineering, the backstepping method can be found in several applications, such as in gas coning control \cite{c8}, flow control in porous media \cite{c9}, slugging control in drilling \cite{c10}, and lost circulation control \cite{c11,c00}.

In spite of the fact that the infinite-dimensional backstepping is limited to Volterra nonlinearities \cite{Rafa1,Rafa2}, local stabilization of nonlinear PDE systems shows promising results in recent years. As an example, feedback control design for a 2$\times$2 quasilinear hyperbolic PDEs is presented in \cite{Coron}. An early research on control of Burgers' equation may be found in \cite{Krstic2}. In the paper, the authors derived the nonlinear boundary control laws that achieve global asymptotic stability using Lyapunov method. In \cite{Krstic3}, the unstable shock-like equilibrium profiles of the viscous Burgers equation was stabilized using control at the boundaries. Recent results on control of Burgers' equation could be found in \cite{Balogh,Liu,Smy}. In \cite{Aamo}, a linear coupled hyperbolic PDE-ODE system is studied. Other works in control of coupled PDE-ODE systems include control with Neumann interconnections \cite{Susto}, coupled ODE-Schrodinger equation \cite{beibei}, and adaptive control of PDE-ODE cascade systems with uncertain harmonic disturbances \cite{zaihua}. Other researchers studied coupled systems of linear PDE-ODE systems \cite{Krstic1,Daa,Hasan} and nonlinear ODE and linear PDE cascade systems \cite{ahmed}.

In this paper, we consider boundary observer design for a reaction-advection-diffusion (RAD) equation with variable coefficients in space and time. The RAD equation can be used to model chemical and biological processes in flowing medium, e.g., fluid flow through porous medium. The evolution equation combines three processes and can be derived from mass balances equation.

\section{Problem Formulation}

We consider the following space and time dependent reaction-advection-diffusion equations with mixed boundary conditions
\begin{eqnarray}
\frac{\partial u}{\partial t}(r,t) &=& D(r,t) \frac{\partial^2u}{\partial r^2}(r,t)+b(r,t)\frac{\partial u}{\partial r}(r,t)+\phi(r,t)u(r,t)\label{mol1}\\
u(0,t) &=& 0\label{mol2}\\
\frac{\partial u}{\partial r}(1,t) &=& u(1,t)+U(t)\label{mol3}
\end{eqnarray}
where $u(r,t)$ is the system state, $r\in[0,1]$ is the spatial variable, and $t\in[0,\infty)$ is the time variable. The coefficients for diffusion, advection, and reaction, are denoted by $D(r,t)\in\mathbb{C}^{2,1}\left((0,1)\times(0,\infty)\right)$, $b(r,t)\in\mathbb{C}^{1,1}\left((0,1)\times(0,\infty)\right)$, and $\phi(r,t)\in\mathbb{C}\left((0,1)\times(0,\infty)\right)$, respectively. We assume these coefficients and the input function $U(t)$ are known, and in particular $D(r,t)>0$. To simplify the equation, the advection term $\frac{\partial u}{\partial r}$ can be eliminated using a simple transformation. Let us define a new state variable
\begin{eqnarray}
c(r,t) = u(r,t)e^{\int_0^{r}\!\frac{b(\tau,t)}{2D(\tau,t)}\,\mathrm{d}\tau}\label{der}
\end{eqnarray}
The derivatives of \eqref{der} with respect to $t$ and $r$ are given by
\begin{eqnarray}
\frac{\partial u}{\partial t}(r,t) &=& \frac{\partial c}{\partial t}(r,t)e^{-\int_0^{r}\!\frac{b(\tau,t)}{2D(\tau,t)}\,\mathrm{d}\tau}\nonumber\\
&&-\left(\frac{1}{2}\int_0^{r}\!\frac{\partial}{\partial t}\left(\frac{b(\tau,t)}{D(\tau,t)}\right)\,\mathrm{d}\tau\right)c(r,t)e^{-\int_0^{r}\!\frac{b(\tau,t)}{2D(\tau,t)}\,\mathrm{d}\tau}\\
\frac{\partial u}{\partial r}(r,t) &=& \frac{\partial c}{\partial r}(r,t)e^{-\int_0^{r}\!\frac{b(\tau,t)}{2D(\tau,t)}\,\mathrm{d}\tau}-\frac{b(r,t)}{2D(r,t)}c(r,t)e^{-\int_0^{r}\!\frac{b(\tau,t)}{2D(\tau,t)}\,\mathrm{d}\tau}\\
\frac{\partial^2u}{\partial r^2}(r,t) &=& \frac{\partial^2c}{\partial r^2}(r,t)e^{-\int_0^{r}\!\frac{b(\tau,t)}{2D(\tau,t)}\,\mathrm{d}\tau}-\frac{b(r,t)}{D(r,t)}\frac{\partial c}{\partial r}(r,t)e^{-\int_0^{r}\!\frac{b(\tau,t)}{2D(\tau,t)}\,\mathrm{d}\tau}\nonumber\\
&&+\frac{b(r,t)^2}{4D(r,t)^2}c(r,t)e^{-\int_0^{r}\!\frac{b(\tau,t)}{2D(\tau,t)}\,\mathrm{d}\tau}\nonumber\\
&&-\frac{1}{2}\left(\frac{b_r(r,t)}{D(r,t)}-\frac{b(r,t)D_r(r,t)}{D(r,t)^2}\right)c(r,t)e^{-\int_0^{r}\!\frac{b(\tau,t)}{2D(\tau,t)}\,\mathrm{d}\tau}
\end{eqnarray}
Substituting these equations into \eqref{mol1}-\eqref{mol3}, yields
\begin{eqnarray}
\frac{\partial c}{\partial t}(r,t) &=& D(r,t)\frac{\partial^2c}{\partial r^2}(r,t)+\lambda(r,t)c(r,t)\label{sysmain}\\
c(0,t) &=& 0\label{sysmainbc1}\\
\frac{\partial c}{\partial r}(1,t) &=& H(t)c(1,t)+M(t)\label{sysmainbc2}
\end{eqnarray}
where
\begin{eqnarray}
\lambda(r,t) &=& \phi(r,t)-\frac{b(r,t)^2}{4D(r,t)}-\frac{b_r(r,t)}{2}+\frac{b(r,t)D_r(r,t)}{2D(r,t)}\nonumber\\
&&+\frac{1}{2}\int_0^{r}\!\frac{\partial}{\partial t}\left(\frac{b(\tau,t)}{D(\tau,t)}\right)\,\mathrm{d}\tau\\
M(t) &=& U(t)e^{\int_0^{1}\!\frac{b(\tau,t)}{2D(\tau,t)}\,\mathrm{d}\tau}\\
H(t) &=& 1+\frac{b(1,t)}{2D(1,t)}
\end{eqnarray}
The objective is to estimate the state $c(r,t)$ using only one boundary measurement $c(1,t)$.

\section{Boundary Observer Design}

We design the state observer for \eqref{sysmain}-\eqref{sysmainbc2} as the copy of the plant plus an output injection term as follow
\begin{eqnarray}
\frac{\partial \hat{c}}{\partial t}(r,t) &=& D(r,t) \frac{\partial^2\hat{c}}{\partial r^2}(r,t)+\lambda(r,t)\hat{c}(r,t)+p_1(r,t)\left(c(1,t)-\hat{c}(1,t)\right)\\
\hat{c}(0,t) &=& 0\\
\frac{\partial \hat{c}}{\partial r}(1,t) &=& H(t)\hat{c}(1,t)+M(t)+p_{10}(t)\left(c(1,t)-\hat{c}(1,t)\right)
\end{eqnarray}
where $p_1(r,t)$ and $p_{10}(t)$ are observer gains to be determined later. If we define $\tilde{c}(r,t)=c(r,t)-\hat{c}(r,t)$, then the error system is given by
\begin{eqnarray}
\frac{\partial \tilde{c}}{\partial t}(r,t) &=& D(r,t) \frac{\partial^2\tilde{c}}{\partial r^2}(r,t)+\lambda(r,t)\tilde{c}(r,t)-p_1(r,t)\tilde{c}(1,t)\label{err1}\\
\tilde{c}(0,t) &=& 0\\
\frac{\partial \tilde{c}}{\partial r}(1,t) &=& H(t)\tilde{c}(1,t)-p_{10}(t)\tilde{c}(1,t)\label{err3}
\end{eqnarray}
We employ a Volterra integral transformation
\begin{eqnarray}
\tilde{c}(r,t) = \tilde{w}(r,t) - \int_r^1\! p(r,s,t)\tilde{w}(s,t) \,\mathrm{d}s\label{trans}
\end{eqnarray}
to transform the error system \eqref{err1}-\eqref{err3} into the following target system
\begin{eqnarray}
\frac{\partial \tilde{w}}{\partial t}(r,t) &=& D(r,t)\frac{\partial^2\tilde{w}}{\partial r^2}(r,t)+\mu\tilde{w}(r,t)\label{e1}\\
\tilde{w}(0,t) &=& 0\\
\frac{\partial \tilde{w}}{\partial r}(1,t) &=& -\frac{1}{2}\tilde{w}(1,t)\label{e3}
\end{eqnarray}
where the free parameter $\mu$ can be used to set the desired rate of stability. The transformation \eqref{trans} is invertible and the transformation kernel $p(r,s,t)$ is used to find the observer gains $p_1(r,t)$ and $p_{10}(t)$.

\begin{lemma}
The error system \eqref{e1}-\eqref{e3} is exponentially stable in the $\mathbb{L}^2(0,1)$-norm under the condition
\begin{eqnarray}
\mu<-\left\{\frac{\max|D_{rr}(r,t)|}{2}+\frac{D_m^2}{\min|D(r,t)|}\right\}\label{mu}
\end{eqnarray}
where
\begin{eqnarray}
D_m=\max\left\{0,-\left(\frac{D(1,t)+D_r(1,t)}{2}\right)\right\}\label{dm}
\end{eqnarray}
\end{lemma}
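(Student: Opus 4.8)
The plan is to use a Lyapunov functional of the form $V(t) = \tfrac{1}{2}\int_0^1 \tilde{w}(r,t)^2\,\mathrm{d}r$ and show that $\dot{V}(t) \le -2\sigma V(t)$ for some $\sigma>0$ under the stated condition on $\mu$, whence exponential decay in the $\mathbb{L}^2(0,1)$-norm follows by Gr\"onwall's inequality. First I would differentiate $V$ along \eqref{e1}, substituting the PDE to get $\dot{V} = \int_0^1 \tilde{w}\left(D\tilde{w}_{rr} + \mu\tilde{w}\right)\mathrm{d}r$. The reaction term contributes $\mu\int_0^1\tilde{w}^2\,\mathrm{d}r = 2\mu V$, which is the good (stabilizing, since $\mu<0$) term. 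The work is in the diffusion term $\int_0^1 D(r,t)\,\tilde{w}\,\tilde{w}_{rr}\,\mathrm{d}r$.

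Next I would integrate the diffusion term by parts twice to move derivatives off $\tilde{w}_{rr}$ and to expose the boundary contributions. Integrating by parts once gives $\left[D\tilde{w}\tilde{w}_r\right]_0^1 - \int_0^1 (D\tilde{w})_r\tilde{w}_r\,\mathrm{d}r = \left[D\tilde{w}\tilde{w}_r\right]_0^1 - \int_0^1 D\tilde{w}_r^2\,\mathrm{d}r - \int_0^1 D_r\tilde{w}\tilde{w}_r\,\mathrm{d}r$. Using the boundary conditions \eqref{e3} and $\tilde{w}(0,t)=0$, the boundary bracket becomes $D(1,t)\tilde{w}(1,t)\tilde{w}_r(1,t) = -\tfrac12 D(1,t)\tilde{w}(1,t)^2$. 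For the last integral I would integrate by parts once more, writing $\int_0^1 D_r\tilde{w}\tilde{w}_r\,\mathrm{d}r = \tfrac12\int_0^1 D_r\,(\tilde{w}^2)_r\,\mathrm{d}r = \tfrac12\left[D_r\tilde{w}^2\right]_0^1 - \tfrac12\int_0^1 D_{rr}\tilde{w}^2\,\mathrm{d}r$, so the boundary piece is $\tfrac12 D_r(1,t)\tilde{w}(1,t)^2$. Collecting all boundary terms gives a coefficient $-\tfrac12\bigl(D(1,t)+D_r(1,t)\bigr)\tilde{w}(1,t)^2$; this is where the quantity $D_m$ in \eqref{dm} enters — if the coefficient is already nonpositive it may be discarded, otherwise it must be bounded in terms of the interior energy.

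The main obstacle, and the step requiring care, is controlling the two potentially-bad terms: the boundary term $D_m\,\tilde{w}(1,t)^2$ (when positive) and the interior term $\tfrac12\int_0^1 D_{rr}\tilde{w}^2\,\mathrm{d}r \le \tfrac{\max|D_{rr}|}{2}\int_0^1\tilde{w}^2\,\mathrm{d}r$. The interior term is immediately absorbed into $2\mu V$, contributing the $\tfrac{\max|D_{rr}|}{2}$ summand in \eqref{mu}. For the boundary term I would use the trace-type inequality $\tilde{w}(1,t)^2 = \int_0^1 (\tilde{w}^2)_r\,\mathrm{d}r = 2\int_0^1 \tilde{w}\tilde{w}_r\,\mathrm{d}r \le \varepsilon\int_0^1\tilde{w}_r^2\,\mathrm{d}r + \tfrac{1}{\varepsilon}\int_0^1\tilde{w}^2\,\mathrm{d}r$ (using $\tilde{w}(0,t)=0$), and then choose $\varepsilon$ so that $D_m\varepsilon$ is dominated by the strictly negative coefficient $-\min|D|$ of the good term $\int_0^1 D\tilde{w}_r^2\,\mathrm{d}r$; balancing, e.g. $\varepsilon = \min|D|/D_m$, leaves a residual $D_m^2/\min|D|$ multiplying $\int_0^1\tilde{w}^2\,\mathrm{d}r$, which is the second summand in \eqref{mu}. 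Under \eqref{mu} the net coefficient of $V$ is strictly negative, the remaining $-\min|D|\int_0^1\tilde{w}_r^2\,\mathrm{d}r$ term is discarded as nonpositive, and $\dot{V}\le -2\sigma V$ follows, giving $\|\tilde{w}(\cdot,t)\|_{\mathbb{L}^2(0,1)} \le e^{-\sigma t}\|\tilde{w}(\cdot,0)\|_{\mathbb{L}^2(0,1)}$.
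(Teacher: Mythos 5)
Your proposal is correct and follows essentially the same route as the paper: the same Lyapunov functional $W(t)=\frac{1}{2}\int_0^1\tilde{w}(r,t)^2\,\mathrm{d}r$, the same double integration by parts producing the boundary coefficient $-\frac{1}{2}\left(D(1,t)+D_r(1,t)\right)$ and the interior $\frac{D_{rr}}{2}+\mu$ term, and the same Young/Poincar\'e (trace) estimate with $\varepsilon=\min|D|/D_m$ yielding the residual $D_m^2/\min|D|$. In fact you supply more detail than the paper, which compresses the final absorption step into the phrase ``Using Young's and Poincare's inequalities.''
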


\begin{proof}
Consider the following Lyapunov function
\begin{eqnarray}
W(t) &=& \frac{1}{2}\int_0^1\! \tilde{w}(r,t)^2\,\mathrm{d}r\label{lya}
\end{eqnarray}
The derivative of \eqref{lya} with respect to $t$ along \eqref{e1}-\eqref{e3} is
\begin{eqnarray}
\dot{W}(t) &=& -\left(\frac{D(1,t)+D_r(1,t)}{2}\right)\tilde{w}(1,t)^2-\int_0^1\! D(r,t)\left(\frac{\partial\tilde{w}}{\partial r}(r,t)\right)^2\,\mathrm{d}r\nonumber\\
&&+\int_0^1\! \left(\frac{D_{rr}(r,t)}{2}+\mu\right)\tilde{w}(r,t)^2\,\mathrm{d}r
\end{eqnarray}
Using Young's and Poincare's inequalities, we have
\begin{eqnarray}
\dot{W}(t) &\leq& D_m\tilde{w}(1,t)^2-\int_0^1\! D(r,t)\left(\frac{\partial\tilde{w}}{\partial r}(r,t)\right)^2\,\mathrm{d}r\nonumber\\
&&+\int_0^1\! \left(\frac{\max|D_{rr}(r,t)|}{2}+\mu\right)\tilde{w}(r,t)^2\,\mathrm{d}r\\
&\leq& \int_0^1\! \left(\frac{\max|D_{rr}(r,t)|}{2}+\frac{D_m^2}{\min|D(r,t)|}+\mu\right)\tilde{w}(r,t)^2\,\mathrm{d}r
\end{eqnarray}
where $D_m$ is given by \eqref{dm}. Choosing $\mu$ as in \eqref{mu} completes the proof.
\end{proof}

Let us calculate the derivative of \eqref{trans} with respect to $r$
\begin{eqnarray}
\frac{\partial \tilde{c}}{\partial r}(r,t) &=& \frac{\partial \tilde{w}}{\partial r}(r,t) + p(r,r,t)\tilde{w}(r,t) - \int_r^1\! p_r(r,s,t)\tilde{w}(s,t) \,\mathrm{d}s\label{x}\\
\frac{\partial^2\tilde{c}}{\partial r^2}(r,t) &=& \frac{\partial^2\tilde{w}}{\partial r^2}(r,t) + \frac{\mathrm{d}}{\mathrm{d}r}p(r,r,t)\tilde{w}(r,t) + p(r,r,t)\frac{\partial \tilde{w}}{\partial r}(r,t)\nonumber\\
&& +p_r(r,r,t)\tilde{w}(r,t)- \int_r^1\! p_{rr}(r,s,t)\tilde{w}(s,t) \,\mathrm{d}s\label{xx}
\end{eqnarray}
Furthermore, we calculate the derivative of \eqref{trans} with respect to $t$
\begin{eqnarray}
\frac{\partial \tilde{c}}{\partial t}(r,t) &=& D(r,t)\frac{\partial^2\tilde{w}}{\partial r^2}(r,t)+\mu\tilde{w}(r,t)-\int_r^1\! \mu p(r,s,t)\tilde{w}(s,t) \,\mathrm{d}s\nonumber\\
&&-p(r,1,t)D(1,t)\frac{\partial\tilde{w}}{\partial r}(1,t)+p(r,r,t)D(r,t)\frac{\partial\tilde{w}}{\partial r}(r,t)\nonumber\\
&&+\left(p(r,1,t)D(1,t)\right)_s\tilde{w}(1,t)-\left(p(r,r,t)D(r,t)\right)_s\tilde{w}(r,t)\nonumber\\
&&-\int_r^1\! \left(p(r,s,t)D(s,t)\right)_{ss}\tilde{w}(s,t) \,\mathrm{d}s\nonumber\\
&&-\int_r^1\! p_t(r,s,t)\tilde{w}(s,t) \,\mathrm{d}s\label{t}
\end{eqnarray}
Substituting \eqref{xx} and \eqref{t} into \eqref{err1}-\eqref{err3}, the following system need to be satisfied
\begin{eqnarray}
p_t(r,s,t) &=& D(r,t)p_{rr}(r,s,t)-\left(D(s,t)p(r,s,t)\right)_{ss}\nonumber\\
&&-\left(\mu-\lambda(r,t)\right) p(r,s,t)\label{ker1}\\
\frac{\mathrm{d}}{\mathrm{d}r}p(r,r,t)&=&-\frac{D_r(r,t)}{2D(r,t)}p(r,r,t)+\frac{\left(\mu-\lambda(r,t)\right)}{2D(r,t)}\label{ker2}\\
p(0,s,t) &=& 0\label{ker3}
\end{eqnarray}
Furthermore, the state observer gains are obtained as
\begin{eqnarray}
p_1(r,t)&=&-\frac{1}{2}p(r,1,t)D(1,t)-\left(p(r,1,t)D(1,t)\right)_s\label{gain1}\\
p_{10}(t) &=& \frac{1}{2}+H(t)-p(1,1,t)\label{gain2}
\end{eqnarray}

The second boundary condition \eqref{ker2} can be simplified further. First, multiplying it by $\sqrt{2D(r,t)}$, we have
\begin{eqnarray}
\sqrt{2D(r,t)}\frac{\mathrm{d}}{\mathrm{d}r}p(r,r,t)+\frac{D_r(r,t)}{\sqrt{2D(r,t)}}p(r,r,t)&=&\frac{\mu-\lambda(r,t)}{\sqrt{2D(r,t)}}
\end{eqnarray}
alternatively, this equation can be written as
\begin{eqnarray}
\frac{\mathrm{d}}{\mathrm{d}r}\left(\sqrt{2D(r,t)}p(r,r,t)\right)&=&\frac{\mu-\lambda(r,t)}{\sqrt{2D(r,t)}}
\end{eqnarray}
Integrating the equation above, and from the fact that $p(0,0,t)=0$, the kernel equation become
\begin{eqnarray}
p_t(r,s,t) &=& D(r,t)p_{rr}(r,s,t)-\left(D(s,t)p(r,s,t)\right)_{ss}\nonumber\\
&&-\left(\mu-\lambda(r,t)\right) p(r,s,t)\label{kg1}\\
p(r,r,t) &=& \frac{1}{2\sqrt{D(r,t)}}\int_0^r\! \frac{\mu-\lambda(\tau,t)}{\sqrt{D(\tau,t)}} \,\mathrm{d}\tau\label{kg2}\\
p(0,s,t) &=& 0\label{kg3}
\end{eqnarray}

The result from this section could be stated as follow.
\begin{theorem}
Let $p(r,s,t)$ be the solution of system \eqref{kg1}-\eqref{kg3}. Then for any initial condition $\tilde{c}_0(r)\in\mathbb{L}^2(0,1)$ system \eqref{err1}-\eqref{err3} with $p_1(r,t)$ and $p_{10}(t)$ are given by \eqref{gain1} and \eqref{gain2}, respectively, has a unique classical solution $\tilde{c}(r,t)\in\mathbb{C}^{2,1}\left((0,1)\times(0,\infty)\right)$. Additionally, the origin $\tilde{c}(r,t)\equiv0$ is exponentially stable in the $\mathbb{L}^2(0,1)$-norm.
\end{theorem}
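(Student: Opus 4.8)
The plan is to establish the two claims -- existence of a unique classical solution and $\mathbb{L}^2$ exponential stability -- by transporting them through the Volterra transformation \eqref{trans}, since the Lemma already furnishes exponential stability of the target system \eqref{e1}--\eqref{e3}. Three ingredients are needed: (i) the kernel problem \eqref{kg1}--\eqref{kg3} admits a solution $p(r,s,t)$ that is $\mathbb{C}^{2,1}$ on the triangle $\mathcal{T}=\{0\le r\le s\le 1\}$ and bounded, together with its first derivatives, uniformly in $t$ -- consistent with the Lemma, which already presumes the coefficient bounds in \eqref{mu} are uniform in $t$; (ii) the transformation \eqref{trans} is boundedly invertible on $\mathbb{L}^2(0,1)$, with operator bounds uniform in $t$; and (iii) the target system is well-posed and its parabolic smoothing upgrades $\mathbb{L}^2$ initial data to a classical solution for $t>0$.

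Step (i) is the substantive one and is the successive-approximation argument advertised in the abstract. Expanding $(D(s,t)p)_{ss}=D p_{ss}+2D_s p_s+D_{ss}p$, the principal part of \eqref{kg1} is $p_t-D(r,t)p_{rr}+D(s,t)p_{ss}$; after a change of variables that brings this part to canonical form on $\mathcal{T}$ and integrating using the boundary data \eqref{kg2}--\eqref{kg3} (with a time integration that absorbs the $p_t$ term against the instantaneous kernel at $t=0$), \eqref{kg1}--\eqref{kg3} becomes a fixed-point equation $p=F[p]$ whose right-hand side is a sum of iterated integrals of $p,p_r,p_s$ plus a forcing term built from \eqref{kg2}. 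Taking $p^{(0)}$ to be the forcing term and $p^{(n+1)}=F[p^{(n)}]$, I would bound the increments $p^{(n+1)}-p^{(n)}$ in a norm controlling the relevant first derivatives, getting estimates of order $(C_T\,t)^n/n!$ with $C_T$ depending only on the $\mathbb{C}^{2,1}$- and $\mathbb{C}^{1,1}$-norms of $D,b,\phi$ on $[0,T]$; absolute and uniform convergence of $\sum_n(p^{(n+1)}-p^{(n)})$ on $\mathcal{T}\times[0,T]$ then gives existence, term-by-term differentiation the $\mathbb{C}^{2,1}$ regularity, and the contraction estimate uniqueness. This is where the real difficulty lies: unlike the time-invariant reaction--diffusion case, in which the kernel solves a Klein--Gordon equation on the triangle, here the extra $p_t$ term couples the kernel to its own evolution and the diffusion coefficient is space--time dependent, so the iteration has to be run in a function space that simultaneously dominates $p_{rr}$ and $p_{ss}$ and is stable under multiplication by $D(r,t)$, and every constant must be tracked over finite horizons.

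For (ii), the inverse of \eqref{trans} is again a Volterra transformation, $\tilde w(r,t)=\tilde c(r,t)+\int_r^1 q(r,s,t)\tilde c(s,t)\,\mathrm{d}s$, its kernel $q$ obtained from $p$ by the same Neumann-series / successive-approximation construction (equivalently, $q$ solves the kernel problem of the inverse direction); since $p$ and $q$ are continuous on the compact set $\mathcal{T}$ and bounded uniformly in $t$, the induced operators $I-\mathcal{P}$ and $I+\mathcal{Q}$ are bounded on $\mathbb{L}^2(0,1)$ with norms uniform in $t$, and they are mutual inverses. For (iii), \eqref{e1}--\eqref{e3} is a scalar reaction--diffusion equation with $D(r,t)>0$, a Dirichlet condition at $r=0$ and a dissipative Robin condition at $r=1$; by standard parabolic theory -- Galerkin plus energy estimates, or the evolution-family theory for the time-dependent operator $D(r,t)\partial_r^2+\mu$ under these boundary conditions -- it has a unique weak solution for $\tilde w_0\in\mathbb{L}^2(0,1)$, and interior-and-boundary parabolic regularity, using the assumed smoothness of the coefficients, lifts it to $\tilde w\in\mathbb{C}^{2,1}\big((0,1)\times(0,\infty)\big)$.

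It remains to combine the pieces. Given $\tilde c_0\in\mathbb{L}^2(0,1)$, set $\tilde w_0=\tilde c_0+\int_\cdot^1 q(\cdot,s,0)\tilde c_0(s)\,\mathrm{d}s\in\mathbb{L}^2(0,1)$, solve the target system for $\tilde w$, and define $\tilde c=\tilde w-\int_\cdot^1 p(\cdot,s,t)\tilde w(s,t)\,\mathrm{d}s$. Since the gains \eqref{gain1}--\eqref{gain2} were chosen exactly so that inserting \eqref{xx} and \eqref{t} into \eqref{err1}--\eqref{err3} reduces to \eqref{ker1}--\eqref{ker3}, this $\tilde c$ solves \eqref{err1}--\eqref{err3}; its $\mathbb{C}^{2,1}$ regularity follows from that of $\tilde w$ together with the $\mathbb{C}^2$-in-$r$ regularity of $p$ and the smoothing in $s$ from the integral term, and uniqueness transfers from the target system through (ii). For stability, the Lemma gives $\|\tilde w(\cdot,t)\|_{\mathbb{L}^2}\le M e^{-\alpha t}\|\tilde w_0\|_{\mathbb{L}^2}$ with $\alpha>0$ fixed by \eqref{mu}, and the uniform operator bounds of (ii) give $\|\tilde c(\cdot,t)\|_{\mathbb{L}^2}\le\|I-\mathcal{P}\|\,\|\tilde w(\cdot,t)\|_{\mathbb{L}^2}\le M'e^{-\alpha t}\|\tilde c_0\|_{\mathbb{L}^2}$, which is the asserted exponential stability in the $\mathbb{L}^2(0,1)$-norm.
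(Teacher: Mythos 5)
Your overall architecture (kernel existence $\to$ bounded invertibility of the Volterra transformation $\to$ well-posedness and decay of the target system $\to$ transfer back) is the standard backstepping route and is in fact more complete than what the paper writes down: the paper's ``Proof of Theorem~1'' consists solely of the kernel-existence argument and never addresses your steps (ii) and (iii) at all. The difficulty is that your step (i) --- the only step the paper does carry out, and the crux of the theorem --- contains a genuine gap in exactly the place where the time dependence bites.

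The kernel system \eqref{kg1}--\eqref{kg3} is a Goursat-type problem in $(r,s)$ with $p_t$ appearing as an extra term; it carries \emph{no initial condition in $t$}, so your proposed ``time integration that absorbs the $p_t$ term against the instantaneous kernel at $t=0$'' has nothing to integrate from --- there is no prescribed $p(r,s,0)$, and inventing one (say, the frozen-coefficient kernel at $t=0$) changes the problem. The paper instead keeps the time derivative inside the iteration: after reducing to the integral equation \eqref{integral}, it sets $\psi^{n+1}$ equal to a double integral of $\psi^n_t-\bar\lambda\psi^n$ (equation \eqref{ming}), so each iterate is obtained by \emph{differentiating} the previous one in $t$. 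This loss of one $t$-derivative per step is precisely why your claimed bound $(C_T t)^n/n!$ in a fixed norm ``controlling the relevant first derivatives'' cannot close: the map $F$ is not bounded on any such space. The paper's way out is a majorant (domination, $\ll$) argument of abstract Cauchy--Kowalevski type, with iterates dominated by $C C_0^n(1-t/t_f)^{-n-1}\xi^n\eta^n(\xi+\eta)/(n+1)!$; the factor $(1-t/t_f)^{-n-1}$ is what pays for the repeated $t$-differentiation, at the price of requiring time-analyticity (or at least Gevrey regularity) of the coefficients and yielding convergence only on a finite horizon $t<t_f$. If you want to repair your step (i), you must either adopt this majorant scheme or find a genuinely different formulation in which $p_t$ does not force derivative loss; the contraction-in-$C^1$ iteration you describe will not converge as stated. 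Your steps (ii) and (iii) are sound in outline and are a welcome addition relative to the paper, but they rest on the uniform-in-$t$ kernel bounds that step (i) is supposed to deliver.
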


\section{Proof of Theorem 1}

The existence of a kernel function satisfy \eqref{kg1}-\eqref{kg3} is hard to be proved due to the state dependency of of the diffusion coefficient, as can be seen from the second term of the right hand side of \eqref{kg1}. In order to handle this problem, we can change the equation \eqref{kg1} into a standard form. Let us define
\begin{eqnarray}
\breve{p}(\bar{r},\bar{s},t) &=& D(s,t)p(r,s,t)\\
\bar{r} &=& \phi(r) = \sqrt{D(0,t)}\int_0^r\! \frac{1}{\sqrt{D(\tau,t)}} \,\mathrm{d}\tau\\
\bar{s} &=& \phi(s) = \sqrt{D(0,t)}\int_0^s\! \frac{1}{\sqrt{D(\tau,t)}} \,\mathrm{d}\tau
\end{eqnarray}
Using these definitions, we compute\footnote{for simplicity $\breve{p}=\breve{p}(\bar{r},\bar{s},t)$ and $p=p(r,s,t)$.}
\begin{eqnarray}
\breve{p}_t &=& D(s,t)p_t+\frac{D_t(s,t)}{D(s,t)}\breve{p}\label{p1}\\
\breve{p}_{\bar{r}} &=& D(s,t)p_r\frac{\sqrt{D(r,t)}}{\sqrt{D(0,t)}}\label{p2}\\
\breve{p}_{\bar{r}\bar{r}} &=& D(s,t)p_{rr}\frac{D(r,t)}{D(0,t)}+\frac{D_r(r,t)}{2\sqrt{D(r,t)D(0,t)}}\breve{p}_{\bar{r}}\label{p3}\\
\breve{p}_{\bar{s}} &=& D_s(s,t)p\frac{\sqrt{D(s,t)}}{\sqrt{D(0,t)}}+D(s,t)p_s\frac{\sqrt{D(s,t)}}{\sqrt{D(0,t)}}\label{p4}\\
\breve{p}_{\bar{s}\bar{s}} &=& D_{ss}(s,t)p\frac{D(s,t)}{D(0,t)}+2D_s(s,t)p_s\frac{D(s,t)}{D(0,t)}\nonumber\\
&&+D(s,t)p_{ss}\frac{D(s,t)}{D(0,t)}+\frac{D_s(s,t)}{2\sqrt{D(s,t)D(0,t)}}\breve{p}_{\bar{s}}\label{p5}
\end{eqnarray}
Rearrange \eqref{p1}, \eqref{p3}, and \eqref{p5}, we have
\begin{eqnarray}
\left(\breve{p}_t-\frac{D_t(s,t)}{D(s,t)}\breve{p}\right)\frac{1}{D(s,t)} &=& p_t\label{pio1}\\
\left(\breve{p}_{\bar{r}\bar{r}}-\frac{D_r(r,t)}{2\sqrt{D(r,t)D(0,t)}}\breve{p}_{\bar{r}}\right)\frac{D(0,t)}{D(s,t)} &=& D(r,t)p_{rr}\label{pio2}\\
\left(\breve{p}_{\bar{s}\bar{s}}-\frac{D_s(s,t)}{2\sqrt{D(s,t)D(0,t)}}\breve{p}_{\bar{s}}\right)\frac{D(0,t)}{D(s,t)} &=& \left(D(s,t)p\right)_{ss}\label{pio3}
\end{eqnarray}
Plugging \eqref{pio1}-\eqref{pio3} into \eqref{kg1}, we have
\begin{eqnarray}
\breve{p}_t &=& D(0,t)\left(\breve{p}_{\bar{r}\bar{r}}-\breve{p}_{\bar{s}\bar{s}}\right)+\left(\frac{D_t(s,t)}{D(s,t)}-(\mu-\lambda(r,t))\right)\breve{p}\nonumber\\
&&-\left(\frac{D_r(r,t)}{2}\sqrt{\frac{D(0,t)}{D(r,t)}}\breve{p}_{\bar{r}}-\frac{D_s(s,t)}{2}\sqrt{\frac{D(0,t)}{D(s,t)}}\breve{p}_{\bar{s}}\right)\label{wow}
\end{eqnarray}
The boundary conditions become
\begin{eqnarray}
\breve{p}(\bar{r},\bar{r},t) &=& \frac{1}{2}\sqrt{\frac{D(r,t)}{D(0,t)}}\int_0^{\bar{r}}\! \left(\mu-\lambda(\phi^{-1}(\tau),t)\right) \,\mathrm{d}\tau\\
\breve{p}(0,\bar{s},t) &=& 0
\end{eqnarray}
The advection term in \eqref{wow} can be eliminated like in the second section of this paper using the following transformation
\begin{eqnarray}
\bar{p}(\bar{r},\bar{s},t) &=& \left(D(r,t)D(s,t)\right)^{-\frac{1}{4}}\breve{p}(\bar{r},\bar{s},t)\label{panjang}
\end{eqnarray}
Computing the derivatives of \eqref{panjang} with respect to $t$, $\bar{r}$, and $\bar{s}$, we have\footnote{for simplicity $\bar{p}=\bar{p}(\bar{r},\bar{s},t)$.}
\begin{eqnarray}
\bar{p}_t &=& \left(D(r,t)D(s,t)\right)^{-\frac{1}{4}}\breve{p}_t-\frac{1}{4D(r,t)D(s,t)}\frac{\partial}{\partial t}\left(D(r,t)D(s,t)\right)\bar{p}\label{satu}\\
\bar{p}_{\bar{r}} &=& \left(D(r,t)D(s,t)\right)^{-\frac{1}{4}}\breve{p}_{\bar{r}}-\frac{D_r(r,t)}{4\sqrt{D(r,t)D(0,t)}}\bar{p}\\
\bar{p}_{\bar{r}\bar{r}} &=& \left(D(r,t)D(s,t)\right)^{-\frac{1}{4}}\breve{p}_{\bar{r}\bar{r}}-\frac{D_r(r,t)}{2\sqrt{D(r,t)D(0,t)}}\bar{p}_{\bar{r}}\nonumber\\
&&-\left(\frac{1}{4}\frac{\partial}{\partial r}\left(\frac{D_r(r,t)}{\sqrt{D(r,t)}}\right)\frac{1}{\sqrt{D(0,t)}}+\frac{D_r(r,t)^2}{16D(r,t)D(0,t)}\right)\bar{p}\label{dua}\\
\bar{p}_{\bar{s}} &=& \left(D(r,t)D(s,t)\right)^{-\frac{1}{4}}\breve{p}_{\bar{s}}-\frac{D_s(s,t)}{4\sqrt{D(s,t)D(0,t)}}\bar{p}\\
\bar{p}_{\bar{s}\bar{s}} &=& \left(D(r,t)D(s,t)\right)^{-\frac{1}{4}}\breve{p}_{\bar{s}\bar{s}}-\frac{D_s(s,t)}{2\sqrt{D(s,t)D(0,t)}}\bar{p}_{\bar{s}}\nonumber\\
&&-\left(\frac{1}{4}\frac{\partial}{\partial s}\left(\frac{D_s(s,t)}{\sqrt{D(s,t)}}\right)\frac{1}{\sqrt{D(0,t)}}+\frac{D_s(s,t)^2}{16D(s,t)D(0,t)}\right)\bar{p}\label{tiga}
\end{eqnarray}
Rearrange \eqref{satu}, \eqref{dua}, and \eqref{tiga}, we have
\begin{eqnarray}
\breve{p}_t &=& \left(D(r,t)D(s,t)\right)^{\frac{1}{4}}\left(\bar{p}_t+\frac{1}{4D(r,t)D(s,t)}\frac{\partial}{\partial t}\left(D(r,t)D(s,t)\right)\bar{p}\right)\label{lam1}\\
\breve{p}_{\bar{r}\bar{r}} &=& \left(D(r,t)D(s,t)\right)^{\frac{1}{4}}\left(\bar{p}_{\bar{r}\bar{r}}+\frac{D_r(r,t)}{2\sqrt{D(r,t)D(0,t)}}\bar{p}_{\bar{r}}\right)\nonumber\\
&&+L(r,t)\left(D(r,t)D(s,t)\right)^{\frac{1}{4}}\bar{p}\label{lam2}\\
\breve{p}_{\bar{s}\bar{s}} &=& \left(D(r,t)D(s,t)\right)^{\frac{1}{4}}\left(\bar{p}_{\bar{s}\bar{s}}+\frac{D_s(s,t)}{2\sqrt{D(s,t)D(0,t)}}\bar{p}_{\bar{s}}\right)\nonumber\\
&&+L(s,t)\left(D(r,t)D(s,t)\right)^{\frac{1}{4}}\bar{p}\label{lam3}
\end{eqnarray}
where
\begin{eqnarray}
L(y,t) &=& \frac{1}{4}\frac{\partial}{\partial y}\left(\frac{D_y(y,t)}{\sqrt{D(y,t)}}\right)\frac{1}{\sqrt{D(0,t)}}+\frac{D_y(y,t)^2}{16D(y,t)D(0,t)}
\end{eqnarray}
Substituting \eqref{lam1}-\eqref{lam3} into \eqref{wow}, we have
\begin{eqnarray}
\bar{p}_t(\bar{r},\bar{s},t) &=& D(0,t)\left(\bar{p}_{\bar{r}\bar{r}}(\bar{r},\bar{s},t)-\bar{p}_{\bar{s}\bar{s}}(\bar{r},\bar{s},t)\right)+\bar{\lambda}(r,s,t)\bar{p}(\bar{r},\bar{s},t)\label{psi1}
\end{eqnarray}
where
\begin{eqnarray}
\bar{\lambda}(r,s,t) &=& -\frac{D_r(r,t)^2}{8D(r,t)}+\frac{D_s(s,t)^2}{8D(s,t)}+D(0,t)\left(L(r,t)-L(s,t)\right)\nonumber\\
&&-\frac{1}{4D(r,t)D(s,t)}\frac{\partial}{\partial t}\left(D(r,t)D(s,t)\right)+\frac{D_t(s,t)}{D(s,t)}\nonumber\\
&&-(\mu-\lambda(r,t))
\end{eqnarray}
The boundary conditions become
\begin{eqnarray}
\bar{p}(\bar{r},\bar{r},t) &=& \frac{1}{2\sqrt{D(0,t)}}\int_0^{\bar{r}}\! \left(\mu-\lambda(\phi^{-1}(\tau),t)\right) \,\mathrm{d}\tau\label{psi2}\\
\bar{p}(0,\bar{s},t) &=& 0\label{psi3}
\end{eqnarray}

We prove the existence of a kernel function satisfy \eqref{psi1} with boundary conditions \eqref{psi2}-\eqref{psi3} using the method of successive approximation. First, we convert the differential equation into an integral equation. We introduce the following change of variables
\begin{eqnarray}
\bar{p}(\bar{r},\bar{s},t) &=& \psi(\xi,\eta,t)\\
\xi &=& \bar{r}+\bar{s}\\
\eta &=& \bar{r}-\bar{s}
\end{eqnarray}
This transformations give
\begin{eqnarray}
\bar{p}_{\bar{r}}(\bar{r},\bar{s},t) &=& \psi_{\xi}(\xi,\eta,t)+\psi_{\eta}(\xi,\eta,t)\\
\bar{p}_{\bar{r}\bar{r}}(\bar{r},\bar{s},t) &=& \psi_{\xi\xi}(\xi,\eta,t)+2\psi_{\xi\eta}(\xi,\eta,t)+\psi_{\eta\eta}(\xi,\eta,t)\\
\bar{p}_{\bar{s}}(\bar{r},\bar{s},t) &=& \psi_{\xi}(\xi,\eta,t)-\psi_{\eta}(\xi,\eta,t)\\
\bar{p}_{\bar{s}\bar{s}}(\bar{r},\bar{s},t) &=& \psi_{\xi\xi}(\xi,\eta,t)-2\psi_{\xi\eta}(\xi,\eta,t)+\psi_{\eta\eta}(\xi,\eta,t)
\end{eqnarray}
Thus, from \eqref{psi1} and \eqref{psi2}-\eqref{psi3} we have
\begin{eqnarray}
\psi_{\xi\eta}(\xi,\eta,t) &=& \frac{1}{4D(0,t)}\left(\psi_t(\xi,\eta,t)-\bar{\lambda}(\phi^{-1}(\xi,\eta),t)\psi(\xi,\eta,t)\right)\label{ade}\\
\psi(\xi,0,t) &=& \frac{1}{2\sqrt{D(0,t)}}\int_0^{\frac{\xi}{2}}\! \left(\mu-\lambda(\phi^{-1}(\tau),t)\right) \,\mathrm{d}\tau\\
\psi(\xi,-\xi,t) &=& 0
\end{eqnarray}
Integrating \eqref{ade} with respect to $\eta$ from $0$ to $\eta$, we get
\begin{eqnarray}
\psi_{\xi}(\xi,\eta,t) &=& \frac{1}{4\sqrt{D(0,t)}}\left(\mu-\lambda\left(\phi^{-1}\left(\frac{\xi}{2}\right),t\right)\right)\nonumber\\
&&+\frac{1}{4D(0,t)}\int_0^{\eta}\!\left(\psi_t(\xi,s,t)-\bar{\lambda}(\phi^{-1}(\xi,s),t)\psi(\xi,s,t)\right)\,\mathrm{d}s
\end{eqnarray}
Next, integrating the above equation with respect to $\xi$ from $-\eta$ to $\xi$, yields
\begin{eqnarray}
\psi(\xi,\eta,t) &=& \frac{1}{4\sqrt{D(0,t)}}\left(\mu-\lambda\left(\phi^{-1}\left(\frac{\xi}{2}\right),t\right)\right)(\xi+\eta)\nonumber\\
&&+\frac{1}{4D(0,t)}\int_{-\eta}^{\xi}\int_0^{\eta}\!\bar{\psi}_t(\tau,s,t)\,\mathrm{d}s\mathrm{d}\tau\label{integral}
\end{eqnarray}
where
\begin{eqnarray}
\bar{\psi}_t(\tau,s,t)= \psi_t(\tau,s,t)-\bar{\lambda}(\phi^{-1}(\tau,s),t)\psi(\tau,s,t)
\end{eqnarray}
We are ready to prove the existence of a kernel function satisfy \eqref{integral} using the method of successive approximation. Let us set up the recursive formula for \eqref{integral} as follow
\begin{eqnarray}
\psi^0(\xi,\eta,t) &=& \frac{1}{4\sqrt{D(0,t)}}\left(\mu-\lambda\left(\phi^{-1}\left(\frac{\xi}{2}\right),t\right)\right)(\xi+\eta)\nonumber\\
&\ll& C\left(1-\frac{t}{t_f}\right)^{-1}\left(\xi+\eta\right)\\
\psi^{n+1}(\xi,\eta,t) &=& \frac{1}{4D(0,t)}\int_{-\eta}^{\xi}\int_0^{\eta}\!\bar{\psi}_t^n(\tau,s,t)\,\mathrm{d}s\mathrm{d}\tau\label{ming}
\end{eqnarray}
for $t<t_f$, $C>0$, and $n\geq0$. The symbol $\ll$ denotes domination. If \eqref{ming} converges, the solution can be written as follows
\begin{eqnarray}
\psi(\xi,\eta,t) = \sum_{n=0}^{\infty} \psi^n(\xi,\eta,t)\label{series}
\end{eqnarray}
For $n=1$, we have
\begin{eqnarray}
\psi^1(\xi,\eta,t) &\ll& CC_0\left(1-\frac{t}{t_f}\right)^{-2}\frac{\xi\eta\left(\xi+\eta\right)}{2}
\end{eqnarray}
where
\begin{eqnarray}
C_0 = \max_{t_0\leq t\leq t_f}\left|\frac{1}{4D(0,t)}\right|
\end{eqnarray}
Using induction, we have
\begin{eqnarray}
\psi^n(\xi,\eta,t) &\ll& CC_0^n\left(1-\frac{t}{t_f}\right)^{-n-1}\frac{\xi^n\eta^n\left(\xi+\eta\right)}{(n+1)!}
\end{eqnarray}
Therefore, the series \eqref{series} could be proved to be absolutely and uniformly convergent.

\end{document}